\documentclass[12pt, a4paper]{amsart}

\usepackage[english]{babel}
\usepackage{amsmath}
\usepackage{amssymb}
\usepackage{bm}
\usepackage{amscd} %simple commutative diagrams
\usepackage{microtype} %avoids most bad boxes
\usepackage{verbatim} %for multiline comments
\usepackage{color}  %for comments
\usepackage{tikz-cd}\usetikzlibrary{babel} %for diagrams 
\usepackage{enumitem}
\usepackage{mathtools} %for coloneqq, ":="
\usepackage{cite}
\usepackage{hyperref} %pdflinks
\usepackage[initials,msc-links]{amsrefs} %backrefs?
\usepackage{dynkin-diagrams}

\usepackage[OT2, T1]{fontenc}
\title[A counterexample to determinant approximation]{A counterexample to the\\ determinant approximation conjecture}

\author[H. Kammeyer]{Holger Kammeyer}

 \address{Heinrich Heine University D{\"u}sseldorf, Faculty of Mathematics and Natural Sciences, Mathematical Institute, Germany}
 \email{holger.kammeyer@hhu.de}
 
\subjclass[2020]{46L10, 20C07}
\keywords{approximation, Fuglede--Kadison determinant}
\theoremstyle{plain}
\newtheorem{theorem}[equation]{Theorem}

\newtheorem{proposition}[equation]{Proposition}
\newtheorem{conjecture}[equation]{Conjecture}
\theoremstyle{definition}
\newtheorem{definition}[equation]{Definition}
\newtheorem*{definition*}{Definition}

\newtheorem*{observation*}{Observation}

\providecommand{\ignore}[1]{}

\providecommand{\R}{\mathbb{R}}
\providecommand{\Q}{\mathbb{Q}}
\providecommand{\Z}{\mathbb{Z}}

\providecommand{\C}{\mathbb{C}}

\newcommand*{\arXiv}[1]{ \href{http://www.arxiv.org/abs/#1}{arXiv:\textbf{#1}}}

\begin{document}

\begin{abstract}
  The determinant approximation conjecture states that for a residually finite group \(G\) and a chain \(G_i\) of finite index normal subgroups with trivial intersection, the Fuglede--Kadison determinant of every matrix over the group algebra \(\Q G\) is the limit of the Fuglede--Kadison determinants of the matrix reductions from \(\Q G\) to \(\Q (G/G_i)\).  We disprove the conjecture by exhibiting an explicit element of the integral group ring of the Heisenberg group with determinant equal to \(2\) and reduced determinants bounded by \(\sqrt[3]{5}\).
\end{abstract}

\maketitle

\section{Introduction}

The purpose of this article is to present a counterexample to the following standing conjecture from operator algebras and group theory.

\begin{conjecture} \label{conj:det-approximation}
  Let \(G\) be a discrete residually finite group and let
  \[ G = G_0 > G_1 > G_2 > \cdots \]
  be finite index normal subgroups of \(G\) such that \(\bigcap_{i=0}^\infty G_i = \{1\}\).  Let \(A \in M_{n \times m} (\Q G)\) be a matrix with canonical reductions \(A_i \in M_{n \times m}(\Q(G/G_i))\).   Then multiplication from the right defines bounded equivariant operators
  \[ R_A \colon (\ell^2 G)^n \rightarrow (\ell^2 G)^m, \qquad R_{A_i} \colon (\ell^2(G/G_i))^n \rightarrow (\ell^2(G/G_i))^m \]
whose Fuglede--Kadison determinants satisfy
  \[ \lim_{i \rightarrow \infty} \det\nolimits_{\mathcal{R}(G/G_i)} R_{A_i} = \det\nolimits_{\mathcal{R}(G)} R_A. \]
\end{conjecture}

The definition of Fuglede--Kadison determinants as well as the history and topological, arithmetic, and dynamical background of the conjecture will be presented below.

Recall that the \emph{discrete Heisenberg group} \(H = H_3(\Z)\) is the subgroup of \(\operatorname{SL}_3(\Z)\) consisting of all upper triangular \((3 \times 3)\)-matrices with ones on the diagonal and arbitrary integers as the remaining three entries.  The group \(H\) is 2-step nilpotent, torsion-free, and generated by
\[ a = \left(\begin{smallmatrix}
1 & 1 & 0\\
0 & 1 & 0\\
0 & 0 & 1
             \end{smallmatrix}\right),
           \qquad 
b = \left(\begin{smallmatrix}
1 & 0 & 0\\
0 & 1 & 1\\
0 & 0 & 1
\end{smallmatrix}\right). \]

\begin{theorem} \label{thm:counterexample}
  Let \(H\) be the discrete Heisenberg group and for \(i \ge 0\), let
  \[ H_i = \operatorname{ker} \left(H_3(\Z) \rightarrow H_3(\Z / 3^i \Z)\right) \]
  be the principal congruence subgroup of level \(3^i\).  Let \(A\) be the \((1 \times 1)\)-matrix whose only entry is
  \[ 1 -2a +2b \in \Z H. \]
  Then
    \[ \limsup_{i \rightarrow \infty} \det\nolimits_{\mathcal{R}(H/H_i)} R_{A_i} \le \sqrt[3]{5} < 2 = \det\nolimits_{\mathcal{R}(H)} R_A. \]
\end{theorem}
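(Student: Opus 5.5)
The proof has two independent parts: computing \(\det_{\mathcal{R}(H)} R_A = 2\), and bounding the determinants over the finite quotients \(H/H_i\) via their representation theory.

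\textbf{The infinite group.} I would write \(1-2a+2b = (1+2b)\bigl(1 - (1+2b)^{-1}\,2a\bigr)\), where \((1+2b)^{-1}\) is understood in the group von Neumann algebra of the central-free abelian subgroup \(\langle b\rangle\cong\Z\). The inverse exists there since \(1+2b\) is invertible in \(L^\infty(S^1)\): \(|1+2w|\ge 1\) on \(S^1\). By multiplicativity of the Fuglede--Kadison determinant, and since \(\langle b\rangle\) is abelian, we get \(\det(1+2b)=\exp\int_{S^1}\log|1+2w|\,dw = 2\) by Jensen's formula. Put \(S=(1+2b)^{-1}2a\). Every word in the power series expansion of \(S^n\) has \(a\)-exponent sum \(n\), so \(\operatorname{tr}(S^n)=0\) for \(n\ge 1\).

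From this I want \(\det(1-S)=1\). For \(|t|\) small, the expansion \(\log\det(1-tS) = -\operatorname{Re}\sum_n t^n\operatorname{tr}(S^n)/n\) gives \(\det(1-tS)=1\). To reach \(t=1\) I would control the spectral radius of \(S\). Passing to the direct integral over the central character \(z\in S^1\), \(S\) becomes \(U\,2(1+2V)^{-1}\) in the rotation algebra, and \(\|S^n\|^{1/n}\) is governed by \(\exp\int\log|2/(1+2w)|\,dw = 1\). This sits exactly at the borderline. So I would use that \(t\mapsto \log\det(1-tS)\) is subharmonic and equals \(0\) for \(|t|<1\), and then argue separately (for instance via the Lück approximation or by an explicit estimate of spectral density near \(0\), using the \(z\)-fibrewise description) that no mass escapes as \(t\to1\). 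This boundary passage is where I expect the real difficulty on the infinite side.

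\textbf{The finite quotients.} The group \(H/H_i = H_3(\Z/3^i)\) has order \(3^{3i}\). We have \(\log\det_{\mathcal R(G)}R_{A_i} = |G|^{-1}\sum_\rho \dim\rho\,\log|\det\rho(A)|\), summed over irreducibles \(\rho\). The irreducibles are of two kinds. First, the \(3^{2i}\) characters of the abelianization, which send \(a,b\) to roots of unity \(\zeta,\xi\). Second, for each \(q=3^j\) with \(1\le j\le i\), Schrödinger-type representations of dimension \(q\), with central character of exact order \(q\). In these, \(a\) acts as a cyclic shift \(P\) and \(b\) as \(\xi\cdot\operatorname{diag}(\omega^k)\) with \(\omega\) primitive of order \(q\).

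For the second kind, \(\det(I+2\xi D-2P)\) is computed by expanding along the cycle. It equals \(\prod_k(1+2\xi\omega^k)-2^q = 1-(-2\xi)^q - 2^q\) up to the sign of the \(q\)-cycle, which is \(+\) since \(q\) is odd. Its absolute value is \(|1-2^q(1+\xi^q)|\) (adjusting signs carefully). The characters contribute at most weight \(3^{2i}/3^{3i}\to0\) times bounded logs, so they are negligible.

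Collecting the weights \(\dim\rho^2\cdot\#\rho/|G|\), I would show that the total mass of representations with \(q=3^j\) is \(\tfrac{2}{3^{j}}\)-ish, summing to \(1\). For each such \(\rho\) the value \(\xi^q\) ranges over \(3^i/q\)-th roots of unity, so the average of \(\log|1-2^q(1+\xi^q)|\) is a Riemann sum for a Mahler-type integral. The main computation is to show that these averaged contributions are tiny for large \(q\), which is where cancellation of \(2^q\) against \(2^q\xi^q\) is essential. The surviving contribution comes from \(q=3\), of weight about \(1/3\) and size \(\le\log 5\). Summing gives \(\limsup\le\tfrac13\log5\), hence the bound \(\sqrt[3]{5}\).
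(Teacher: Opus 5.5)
Your finite-side computation does not establish the bound. The framework is sound: you write $\log\det_{\mathcal{R}(Q)}$ as a Plancherel average of $\log|\det\rho(A)|$ over irreducibles. But three bookkeeping errors mean that $\tfrac13\log 5$ is never actually derived.

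(i) At level $q=3^j$ the irreducibles are $a\mapsto\alpha P$, $b\mapsto\xi D$, where $\alpha^q$ and $\xi^q$ range independently over the $3^{i-j}$-th roots of unity. By dropping $\alpha$ you see only a fraction $q/3^i$ of them.

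(ii) Your cycle expansion gives $\det\rho(A)=\prod_k(1+2\xi\omega^k)+(-2\alpha)^q=1+2^q(\xi^q-\alpha^q)$, not $1-2^q(1+\xi^q)$. With your sign no cancellation can ever occur, since $\xi^q$ is a $3$-power root of unity and never $-1$. Essentially every $\rho$ would then contribute about $\log 2$ per unit of Plancherel mass, the limit would be about $\log 2$, and there would be no counterexample.

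(iii) The Plancherel mass of level $q=3^j$ is $\varphi(q)/3^i=\tfrac23\,3^{j-i}$, not $2/3^j$. It is concentrated on the \emph{largest} $q$, while $q=3$ carries only $2\cdot 3^{-i}\to 0$. So the claimed ``surviving contribution from $q=3$ of weight $\tfrac13$'' has no basis.

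The idea you are missing is that for $q=3^i$ (faithful central character, total mass $\tfrac23$) one has $\alpha^q=\xi^q=1$, hence $\det\rho(A)=1$ exactly. This is the representation-theoretic form of the paper's Proposition~\ref{prop:nilpotent}, where $(a-b)^q=\prod_j(1-hc^{-j})$ acts as $I-R_h^q=0$ on the primitive eigenspaces $V_\zeta$ of $c$. The remaining irreducibles have total mass $\tfrac13$ and satisfy $|\det\rho(A)|\le\lVert\rho(A)\rVert^{\dim\rho}\le 5^{\dim\rho}$, which is precisely the paper's estimate. Carried out correctly, your route gives even more. Level $q=3^{i-k}$, $k\ge 1$, contributes about $\tfrac23(3^{-k}-9^{-k})\log 2$. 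Each level's total is $\ge 0$, being the $\log$ of a nonzero integer. Hence the finite determinants actually converge to $\sqrt[4]{2}$.

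On the infinite side, your factorization $(1+2b)(1-S)$ puts you exactly on the borderline: the fibrewise exponent $\int\log|2/(1+2w)|\,\textup{d}\mu$ equals $0$. You then leave the decisive passage $t\to 1$ open, and L{\"u}ck approximation is of no use there. Note also that globally $\lVert S^n\rVert=2^n$, because the fibre of trivial central character sees $\sup|2/(1+2w)|=2$. So the vanishing of $u(t)=\log\det(1-tS)$ on $|t|<1$ genuinely needs your fibrewise argument: unique ergodicity of irrational rotations plus the direct-integral formula for $\log\det$.

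The step closes with one observation you did not make. The character $a\mapsto e^{i\varphi}$, $b\mapsto 1$ induces a trace-preserving automorphism of $\mathcal{R}(H)$ sending $S$ to $e^{i\varphi}S$. So $u$ is a radial subharmonic function, hence a convex, real-valued, and therefore continuous function of $\log|t|$. Since it vanishes for $\log|t|<0$, you get $u(1)=0$. Already the cheap vanishing for $|t|<\tfrac12$ plus convexity gives $u\ge 0$ everywhere, i.e.\ $\det_{\mathcal{R}(H)}R_A\ge 2$, which is all the strict inequality needs.

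The paper bypasses this entirely. It applies Deninger's formula to $1-2(1-y)x$ with $x=a$, $y=ba^{-1}$, where the inner Mahler integral is $\log 2>0$, away from the borderline.
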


In particular, Conjecture~\ref{conj:det-approximation} is false even for \((1 \times 1)\)-matrices over integral group rings of nilpotent groups of nilpotency class two.

\subsection{Motivation for the conjecture}

Conjecture~\ref{conj:det-approximation} is of interest in equivariant topology, cohomology of arithmetic groups, and dynamical systems of algebraic origin.  We shall however take a biased viewpoint and present the topological motivation in some detail.  Afterwards, we will only briefly indicate the arithmetic and dynamical relevance albeit even more compelling.

The topological motivation is located in the theory of \emph{\(\ell^2\)-invariants}~\cites{Kammeyer:l2-invariants, Lueck:l2-invariants}.  Take a free \(G\)-CW complex \(X\) such that \(G \backslash X\) is compact and let \(C_*(X; \Q)\) be the rational cellular chain complex.  The \(G\)-action on \(X\) turns \(C_*(X; \Q)\) functorially into a chain complex of free \(\Q G\)-modules.  Choosing a suitable basis, the differentials are given by right multiplication with a matrix \(A \in M_{n \times m}(\Q G)\).  These matrices define bounded \(G\)-operators on the \emph{\(\ell^2\)-chain complex}
\[ C_*^{(2)}(X) = \ell^2 G \otimes_{\Q G} C_*(X; \Q). \]
Tools from operator algebras can be applied to extract useful invariants from this complex.  The best-studied example is the \(k\)-th \emph{\(\ell^2\)-Betti number} \(b_k^{(2)}(X)\): the \emph{von Neumann trace} of the kernel projection of the Laplacian
\[ {d^{(2)}_k}^* \!d^{(2)}_k + d^{(2)}_{k+1} {d^{(2)}_{k+1}\!\!}^*. \]
If \(G\) is trivial, \(b_k^{(2)}(X)\) reduces to the ordinary \(k\)-th Betti number \(b_k(X)\).

\smallskip
Another example is the \emph{\(\ell^2\)-torsion} \(\rho^{(2)}(X)\).  The classical counterpart is the \emph{Reidemeister torsion} \(\rho(X; V)\) associated with an orthogonal representation \(G \rightarrow O(V)\) on some finite-dimensional real inner product space \(V\) such that the chain complex
\[ C_*(X; V) = V \otimes_{\Q G} C_*(X; \Q) \]
is acyclic.  If \(d_n\) denotes the \(n\)-th differential in \(C_*(X; V)\), the Reidemeister torsion of \(X\) with respect to \(V\) can be defined by
\begin{equation} \label{eq:reidemeister-torsion} \rho(X; V) = \prod_{n \ge 0} \det\nolimits {|d_n|^\perp}^{(-1)^{n+1}}. \end{equation}
Here \(|d_n|^\perp\) is the restriction of the operator \(|d_n| = \sqrt{d_n^\ast d_n}\) to the orthogonal complement of its kernel, using that all \(C_*(X; V)\) have a well-defined inner product determined by \(V\).  Now the idea is to replace \(V\) with the right regular unitary representation \(\ell^2 G\).  However, the resulting \(\ell^2\)-chain complex \(C^{(2)}_*(X)\) is infinite dimensional whenever \(G\) is infinite so we need to specify what \(\det |d^{(2)}_n|^\perp\) should mean.  This is what the \emph{Fuglede--Kadison determinant} accomplishes.  We recall the definition in our setting.

\medskip
Let \(G\) be a countable group and for \(n, m \ge 1\), let \(T \colon (\ell^2 G)^n \rightarrow (\ell^2 G)^m\) be a left-\(G\)-equivariant bounded operator.  By continuous functional calculus, the positive operator \(|T| = \sqrt{T^*T}\) on \((\ell^2 G)^n\) defines a positive linear functional
\[ \Phi_{|T|} \colon C(\sigma(|T|), \C) \longrightarrow \C, \ f \mapsto \sum_{i=1}^n \langle e_i, f(|T|) e_i \rangle. \]
Here \(e_i \in (\ell^2 G)^n\) is the unit vector that has \(1 \in G\) as the i-th component and zero in all other components and \(C(\sigma(|T|), \C)\) denotes the continuous complex valued functions on the spectrum \(\sigma(|T|) \subset \R_{\ge 0}\) of \(|T|\).  By the Riesz Representation Theorem, the functional \(\Phi_{|T|}\) is given by integration with respect to a unique regular Borel measure \(\mu_{|T|}\) on \(\sigma(|T|)\) called the \emph{spectral measure} of \(|T|\).  We set \(\sigma(|T|)^+ = \sigma(|T|) \setminus \{0\}\).

\begin{definition} \label{def:determinant}
  The \emph{Fuglede--Kadison determinant} of \(T\) is defined by
  \[ \det\nolimits_{\mathcal{R}(G)} T = \exp\left(\int_{\sigma(|T|)^+} \!\!\log \,\textup{d}\mu_{|T|}\right). \]
\end{definition}

The notation \(\mathcal{R}(G)\) refers to the (right) group von Neumann algebra of \(G\) and is used here because the spectral measure can also be constructed as the von Neumann trace of the projection valued measure of \(|T|\).  Note that the Lebesgue integral in the definition is always defined because \(\sigma(|T|)^+ \subseteq (0, \lVert T \rVert]\).  A priori, the integral can take the value \(-\infty\) in which case we set \(\det_{\mathcal{R}(G)} T = \exp(-\infty) = 0\).

\medskip
Coming back to topology, we observe that the Fuglede--Kadison determinant \(\det_{\mathcal{R}(G)} d^{(2)}_n\) is the perfect infinite-dimensional replacement for \(\det |d_n|^\perp\).  So in view of Equation~\eqref{eq:reidemeister-torsion}, we define the \(\ell^2\)-torsion by
\[ \rho^{(2)}(X) = \rho^{(2)}(G \curvearrowright X) = \sum_{n \ge 0} (-1)^{n+1} \log \det\nolimits_{\mathcal{R}(G)} d^{(2)}_n. \]
The logarithm was applied so that some inherent properties of \(\ell^2\)-torsion can be described by additive instead of multiplicative formulas.  Note that \(\rho^{(2)}(X)\) depends on the specific CW structure of \(X\) unless we assume all \(\ell^2\)-Betti numbers of \(X\) vanish.  Without further mention, we assume that \(\det_{\mathcal{R}(G)} d_n^{(2)}\) is never zero for the given \(X\).

\smallskip
The question arises how an \(\ell^2\)-invariant relates to the original invariant.  Ideally, the \(\ell^2\)-invariant should be the limit of the classical invariants of the compact approximations \(G_i \backslash X\) for finite index normal subgroups \(G_i\) of \(G\).  In the case of \(\ell^2\)-Betti numbers, this works perfectly and is the content of the \emph{L\"uck approximation theorem}~\cite{Lueck:lueck-approximation}: For residually finite \(G\) and \((G_i)_{i \ge 0}\) a \emph{residual chain} as in Conjecture~\ref{conj:det-approximation}, we have
\[ \lim_{i \rightarrow \infty} \frac{b_n(G_i \backslash X)}{[G : G_i]} = b^{(2)}_n(X). \]

Inspired by this result, it was conjectured in~\cite{Lueck:homology-growth}*{Conjecture~1.11.(1)} that \(\ell^2\)-torsion should satisfy a corresponding approximation relation.
\begin{conjecture} \label{conj:l2-torsion-approx}
  Let \(G\) be residually finite, let \((G_i)_{i \ge 0}\) be a residual chain, and let \(X\) be a free \(G\)-CW complex such that \(G \backslash X\) is compact.  Then
  \[ \lim_{i \rightarrow \infty} \frac{\rho^{(2)}(\{1\} \curvearrowright G_i \backslash X)}{[G : G_i]} = \rho^{(2)}(X). \]
\end{conjecture}

But an immediate application of Theorem~\ref{thm:counterexample} gives the following result.

\begin{theorem} \label{thm:torsion-approx-failure}
  Let \(H\) and \(H_i\) be as in Theorem~\ref{thm:counterexample}.  There exists a connected free \(H\)-CW complex \(X\) with \(H \backslash X\) compact such that
  \[ \rho^{(2)}(X) - \limsup_{i \rightarrow \infty} \frac{\rho^{(2)}(\{1\} \curvearrowright H_i \backslash X)}{[H : H_i]} \ge \frac{1}{3} \log \frac{8}{5} > 0. \]
\end{theorem}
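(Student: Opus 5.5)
We construct \(X\) so that its \(\ell^2\)-chain complex has a single nonzero differential given by \(R_A\) with \(A = 1 - 2a + 2b\). Take a finite connected CW model with fundamental group \(H\): for instance the closed \(3\)-manifold \(M = H\backslash \mathrm{Nil}\), or any finite \(K(H,1)\). Form \(Y = M \vee S^2\) and attach one \(3\)-cell along an element of \(\pi_2(Y) \cong \Z H\) (the \(\pi_1\)-module generated by the \(S^2\) summand, using that the universal cover of \(M\) is contractible) representing \(1 - 2a + 2b\). Set \(X = \widetilde{Y \cup e^3}\), the universal cover. Its cellular chain complex is the direct sum of \(C_*(\widetilde M)\) and the two-term complex \(\Z H \xrightarrow{\,1-2a+2b\,} \Z H\) in degrees \(3\) and \(2\). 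The cellular boundary of the new \(3\)-cell is the Hurewicz image of the attaching map, which lies in the \(S^2\)-summand.

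Because \(\rho^{(2)}\) is additive for direct sums of chain complexes, we get
\[
\rho^{(2)}(X) = \rho^{(2)}(\widetilde M) \pm \log \det\nolimits_{\mathcal{R}(H)} R_A .
\]
The same splitting holds for every finite quotient \(H_i\backslash X\). There the relevant complex is \(\Q[H/H_i]\) with differential \(R_{A_i}\). The torsion of \(\{1\}\curvearrowright H_i\backslash X\) is computed with \(\mathcal{R}(\{1\})\)-determinants on \(\ell^2(H/H_i)\). This is \([H:H_i]\) times the \(\mathcal{R}(H/H_i)\)-determinant, so after normalising we get \(\log\det_{\mathcal{R}(H/H_i)} R_{A_i}\).

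The sign convention \((-1)^{n+1}\) with \(n = 3\) gives a \(+\) sign, so
\[
\rho^{(2)}(X) - \frac{\rho^{(2)}(H_i\backslash X)}{[H:H_i]} = \Bigl(\rho^{(2)}(\widetilde M) - \frac{\rho^{(2)}(H_i\backslash \widetilde M)}{[H:H_i]}\Bigr) + \log 2 - \log\det\nolimits_{\mathcal{R}(H/H_i)} R_{A_i}.
\]
We still need that the \(M\)-part converges, i.e.\ the bracket tends to \(0\). For the nilmanifold \(M\) this is known, and I would cite it: \(\rho^{(2)}(\widetilde M) = 0\) since \(H\) is infinite amenable (elementary amenable), and the torsion of the finite covers grows sublinearly. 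An alternative avoids citation. Use a finite \(K(H,1)\) built from the standard presentation complex \(\langle a,b \mid [[a,b],a],[[a,b],b]\rangle\) plus a \(3\)-cell, and compute the differentials directly: they are Koszul-type over the abelian pieces, with determinant \(1\) on both levels.

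If we instead cannot control the \(M\)-part for the chain \(H_i\) of level \(3^i\), the cleaner option is to let \(\widetilde M\) contribute torsion that is exactly computable at each finite level. With this in place, Theorem~\ref{thm:counterexample} gives
\[
\liminf\bigl(\log 2 - \log\det\nolimits_{\mathcal{R}(H/H_i)} R_{A_i}\bigr) \ge \log 2 - \tfrac13\log 5 = \tfrac13\log\tfrac85,
\]
which is the claimed bound.

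The main obstacle is controlling the contribution of the \(K(H,1)\) part along this specific chain. Here I would use that \(H_i\backslash \widetilde M\) is again a Heisenberg nilmanifold. Its Reidemeister torsion, and in fact its integral homology torsion, is bounded polynomially in \([H:H_i]\), so after dividing by the index its contribution tends to \(0\).

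A second check: the standing assumption that all determinants are nonzero must hold. This requires \(R_{A_i}\) to be injective or handled via \(|d|^\perp\), and the definition restricted to \(\sigma^+\) takes care of this. We also need \(\det R_A = 2 \neq 0\), which Theorem~\ref{thm:counterexample} supplies.
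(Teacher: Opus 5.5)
Your bookkeeping for the $A$-summand is fine, but the proposal has a genuine gap in the $K(H,1)$ summand. Because you take $X$ to be a universal cover, its chain complex contains $C_*(\widetilde M)$. To conclude from $\det\nolimits_{\mathcal{R}(H/H_i)}R_{A_i}\le\sqrt[3]{5}$, your argument therefore needs $\limsup_i \rho^{(2)}(\{1\}\curvearrowright H_i\backslash\widetilde M)/[H:H_i]\le 0=\rho^{(2)}(\widetilde M)$.

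This is itself an instance of the approximation conjecture that the theorem refutes. It also does not follow from the general inequality $\limsup_i\det\nolimits_{\mathcal{R}(H/H_i)}R_{D_i}\le\det\nolimits_{\mathcal{R}(H)}R_D$. The degree-two differential of $\widetilde M$ enters $\rho^{(2)}$ with sign $(-1)^{3}=-1$, so for it you need a \emph{lower} bound on the finite determinants, i.e.\ control of small singular values. That is exactly what fails for $A$.

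Nothing you offer supplies this. The covers $H_i\backslash\widetilde M$ are not acyclic ($b_1=b_2=2$), so $\rho^{(2)}(\{1\}\curvearrowright H_i\backslash\widetilde M)$ is not a Reidemeister torsion. It is an alternating sum of terms $\log|H_n(H_i\backslash\widetilde M;\Z)_{\mathrm{tors}}|$ and regulator terms, namely logarithms of covolumes of $H_n(H_i\backslash\widetilde M;\Z)/\mathrm{tors}$ inside the harmonic chains. The torsion terms are harmless: $H_1(H_i;\Z)_{\mathrm{tors}}\cong\Z/3^i$ and $H_2$ is torsion-free. L\"uck's result quoted in the introduction also concerns only these terms. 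For the regulators you give no argument.

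Your ``alternative'' is wrong as stated: the individual differentials do not have determinant $1$. For instance, for $d_1$ with entries $a-1$, $b-1$ one has $\bigl(\det\nolimits_{\mathcal{R}(H)}R_{d_1}\bigr)^2=\det\nolimits_{\mathcal{R}(H)}R_{4-a-a^{-1}-b-b^{-1}}$. This is the exponential of the positive tree entropy of the Cayley graph. Only the alternating sum over $\mathcal{R}(H)$ vanishes.

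The paper avoids the issue entirely; note that the statement does not require $X$ to be simply connected. It uses the free cocompact $H$-CW complex attached to $A$ and the generators $a,b$, whose cellular chain complex is $0\to\Z H\xrightarrow{\cdot A}\Z H\xrightarrow{0}(\Z H)^2\xrightarrow{\cdot B}\Z H\to 0$. The only other nonzero differential is $B$ in degree $1$, and it enters with the same sign $+$ as $A$. Hence $\rho^{(2)}(X)=\log\det\nolimits_{\mathcal{R}(H)}R_B+\log\det\nolimits_{\mathcal{R}(H)}R_A$. The one-sided inequality $\limsup_i\log\det\nolimits_{\mathcal{R}(H/H_i)}R_{B_i}\le\log\det\nolimits_{\mathcal{R}(H)}R_B$ is all that is needed for the $B$-part. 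Subadditivity of $\limsup$ then gives $\log 2-\frac13\log 5=\frac13\log\frac85$.

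Your construction, if completed, would give more: an $\ell^2$-acyclic counterexample, whereas the paper's $X$ has $b^{(2)}_1(X)=1$. To complete it you must prove $\rho^{(2)}(\{1\}\curvearrowright H_i\backslash\widetilde M)=o([H:H_i])$. One way is to bound all regulators polynomially in $[H:H_i]$. Use integral cycles and cocycles of polynomial norm representing bases of $H_n(\,\cdot\,;\Z)/\mathrm{tors}$ and $H^n(\,\cdot\,;\Z)/\mathrm{tors}$. These two lattices are dual inside the harmonic chains, so their covolumes multiply to $1$.
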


In particular, Conjecture~\ref{conj:l2-torsion-approx} is false.  The \(H\)-CW complex \(X\) appearing in the theorem is provided by \cite{Kammeyer:l2-invariants}*{Proposition~3.29} applied to our group ring element \(1 -2a +2b \in \Z H\).  Similar conjectures about approximation of \emph{analytic} and \emph{topological} \(\ell^2\)-torsion for a free cocompact Riemannian \(G\)-manifold \(X\)~\cite{Lueck:approximating-survey}*{Conjectures~8.2 and~8.3} state that
\[ \lim_{i \rightarrow \infty} \frac{\log \rho_{\textup{an}}(G_i \backslash X)}{[G : G_i]} = \rho^{(2)}_{\textup{an}}(X), \qquad \lim_{i \rightarrow \infty} \frac{\log \rho_{\textup{top}}(G_i \backslash X)}{[G : G_i]} = \rho^{(2)}_{\textup{top}}(X). \]
These conjectures should likewise fail by realizing \(1 -2a +2b \in \Z H\) as a differential of a suitable manifold, taking care that the contributions in the torsion invariants add rather than cancel by Poincar\'e duality.  We shall address the details in a forthcoming paper.

\smallskip
In contrast, we stress that our counterexample does \emph{not} refute \mbox{\(\ell^2\)-torsion} approximation of an \emph{aspherical} manifold with Heisenberg fundamental group.  In fact, for an aspherical \((2n+1)\)-dimensional closed connected manifold \(X\) with universal covering \(\widetilde{X}\), residually finite \(G = \pi_1 X\), and residual chain \((G_i)_{i \ge 0}\), we have the more specific conjecture that
\begin{equation} \label{eq:aspherical} \lim_{i \rightarrow \infty} \frac{\log |H_n(G_i \backslash \widetilde{X})_{\textup{tors}}|}{[G : G_i]} = (-1)^n \rho^{(2)}(\widetilde{X}), \end{equation}
as stated in~\cite{Lueck:homology-growth}*{Conjecture~1.12.(2)} and~\cite{Kammeyer:l2-invariants}*{Conjecture~6.22}.  W.\,L{\"u}ck showed that this conjecture is true if \(G\) is an elementary amenable group, like the Heisenberg group~\(H\), because then both sides of the equality are zero~\cite{Lueck:homology-growth}*{Corollary~1.13}.  Our counterexample does however disqualify the alleged proof strategy for this conjecture in~\cite{Kammeyer:l2-invariants}*{p.\,148} where it was suggested to attack this conjecture by proving the \emph{torsion Singer conjecture}, the \emph{small regulator conjecture}, and the \emph{determinant approximation conjecture}.

\medskip
On the arithmetic motivation for Conjecture~\ref{conj:det-approximation}, we only remark that the conjecture in~\eqref{eq:aspherical} on torsion growth in homology has an arithmetic cousin called the \emph{Bergeron--Venkatesh conjecture}~\cite{Bergeron-Venkatesh:asymptotic-growth}*{Conjecture~1.3} which applies to sequences of arithmetic congruence subgroups of anisotropic semisimple algebraic \(\Q\)-groups.  It seems inevitable that any potential proof of this conjecture will in one way or another have to address convergence of determinants.

\medskip
We finally turn to the dynamical motivation for Conjecture~\ref{conj:det-approximation}.  Any element \(f \in \Z G\) determines an \emph{algebraic dynamical system} given by the \(G\)-action on the compact Pontryagin dual \(X_f = \widehat{\Z G / \Z G f}\) of the discrete additive group \(\Z G / \Z G f\) endowed with Haar measure \(m_{X_f}\).  C.\,Deninger launched a program to express the \emph{entropy} \(h_f\) of such actions in terms of the Fuglede--Kadison determinant of \(f\) by the formula
\[ h_f = \log \det\nolimits_{\mathcal{R}(G)} f. \]
In~\cite{Deninger:entropy}, he showed this formula holds true for topological and measure-theoretical entropy if \(G\) is amenable, if \(R_f\) is positive, and if \(f\) is invertible in the Banach algebra \(\ell^1 G\) (ignoring another technical assumption).  Many authors have generalized this result to larger classes of \(G\) and/or weaker assumptions on \(f\)~\cites{Deninger-Schmidt:expansive, Bowen:entropy, Kerr-Li:entropy, Li:compact, Bowen-Li:harmonic, Li-Thom:entropy}.  All of the proofs have in common that they rely on some version of determinant approximation.  Invertibility assumptions on \(f\) have the virtue that finite-dimensional approximations have a uniform ``spectral gap'' that ensures the convergence of determinants.  We will come back to this point in the historical discussion below.

The definitive result completing Deninger's program was obtained by B.\,Hayes~\cite{Hayes:fuglede-kadison}*{Theorem~1.1.(ii)}.  He showed that for \emph{sofic} \(G\) with \emph{sofic approximation} \(\Sigma\), topological and measure-theoretical entropy satisfy
\[ h_{\Sigma, m_{X_f}} (X_f, G) = h_\Sigma(X_f, G) = \log \det\nolimits_{\mathcal{R}(G)} f \]
whenever \(R_f\) is injective.  Crucially, Hayes had the right intuition and predicted that under a mere injectivity assumption on~\(R_f\), determinant approximation should fail.  So he resorted to genuinely new methods to prove the theorem. 

\subsection{History of the conjecture}

Fuglede--Kadison determinants show up in the original proof of L{\"u}ck's approximation theorem~\cite{Lueck:lueck-approximation}, so the question of their convergence along finite quotient groups was likely already around at the time.  Therefore Conjecture~\ref{conj:det-approximation} is usually attributed to L{\"u}ck.  The first published reference I could locate is nonetheless a preprint of T.\,Schick~\cite{Schick:L2-determinant} from 1998 where the question is discussed in Remark~6.10 (the remark is omitted in the published article~\cite{Schick:L2-determinant-published}).  An analytic version of the question appeared earlier in 1995 in~\cite{Deitmar:geometric-zeta}*{Question~1.3.2, p.\,23}.  The definitive reference is in any case~\cite{Lueck:l2-invariants}*{Question~13.52, p.\,478}.  Ever since, the question has been discussed or mentioned by various authors: in chronological order \cite{Deninger-Schmidt:expansive}*{p.\,770}, \cite{Deninger:Mahler-measures}*{Question~21}, \cite{Bowen:entropy}, \cite{Kerr-Li:entropy}, \cite{Bowen-Li:harmonic}, \cite{Li-Thom:entropy}*{Section~3.1}, \cite{Koch-Lueck:graph}, \cite{Grabowski:large}*{Section~1.(i)}, \cite{Lueck:approximating-survey}*{Section~7}, \cite{Hayes:fuglede-kadison}*{Section~1}, \cite{Kammeyer:novikov-shubin}, \cite{Le:growth}, \cite{Lueck:twisting}*{Problem~10.5}, \cite{Kammeyer:l2-invariants}*{Remark~5.48 and Section~6.5}, \cite{Jaikin:l2-betti}*{Section~10.3}, \cite{Lueck:lehmer}*{Section~8.1}, \cite{Lueck:survey-l2-3}*{Section~3.5}, \cite{Kirstein-et-al:problems}*{Sections~7.2 and~7.3}.  At some point in this line, the ``question'' was promoted to a ``conjecture''.

\smallskip
Positive results on the conjecture are sparse so far.  Conjecture~\ref{conj:det-approximation} as stated is only known when \(G\) is (virtually) cyclic.  In that case, the conjecture transforms into an approximation property of the Mahler measure proven by Schmidt~\cite{Schmidt:dynamical-systems}*{Lemma~21.8, p.\,183}.  In contrast, the conjecture becomes wrong even for \(G = \Z\) once one replaces rational with complex coefficients in the group algebra.  This is illustrated by an example of L{\"u}ck~\cite{Lueck:l2-invariants}*{Example~13.69}.  Both observations borrow from (transcendental) number theory, a fact that was often taken as an indication that the conjecture should be difficult to prove if not wrong.

\smallskip
More satisfying partial results on Conjecture~\ref{conj:det-approximation} were obtained by imposing additional assumptions on the matrix \(A\) as we already indicated in the previous section.  Indeed, the definition of the Fuglede--Kadison determinant entails that the known weak convergence of spectral measures~\cite{Kammeyer:l2-invariants}*{Proposition~5.16} would grant the convergence of determinants if the measures had no support in a fixed punctured neighborhood of zero (``spectral gap'').  So it is the occurrence of too small singular values that might stand in the way of determinant approximation.  One way to dodge this issue is to assume that \(A \in \Q G\) is invertible in one way or another.  In this vein, C.\,Deninger and K.\,Schmidt~\cite{Deninger-Schmidt:expansive}*{Theorem~6.1} showed that determinant approximation works if \(A\) is a unit in the Banach algebra \(\ell^1(G)\).  This result was later strengthened by D.\,Kerr and H.\,Li~\cite{Kerr-Li:entropy}*{Theorem~7.3} to invertibility in the full group \(C^*\)-algebra \(C^*(G)\).  Requiring invertibility in the group von Neumann algebra, however, meaning \(R_A\) is an invertible operator, gives a spectral gap for \(R_A\) but as opposed to the \(C^*\)-case, this does not immediately produce a uniform bound for all the \(R_{A_i}\).  The remaining most difficult case arises when \(R_A\) is injective but non-invertible because then small singular values will occur.

\smallskip
An approach to control the small singular values was suggested by L{\"u}ck in~\cite{Lueck:approximating-survey}*{Section~17} where he introduces the \emph{uniform integrability condition} on the spectral distribution functions of the \(R_{A_i}\).  Under this condition, he proves the asserted convergence of determinants.  Theorem~\ref{thm:counterexample} thus shows that the uniform integrability condition can be violated for a matrix over the group ring.  In Section~\ref{section:uniform-integrability-failure}, we will reverse engineer and quantify this violation from the discrepancy of the determinants in our example.

\smallskip
It has thus been observed that other frameworks of determinant approximation appear to be more effective than approximation by finite quotient groups.  As an example, H.\,Li and A.\,Thom showed approximation of determinants of a positive matrix \(A\) with coefficients in \(\mathcal{R}(G)\) for an amenable group \(G\) along ``F{\o}lner windows''~\cite{Li-Thom:entropy}*{Theorem~1.4}.

\smallskip
Public opinions on the validity of Conjecture~\ref{conj:det-approximation} have varied in the past.  It seems there was consensus that the conjecture was unlikely to hold true in this generality.  However, some cautious optimism ``in many situations'' has been uttered for example in~\cite{Kirstein-et-al:problems}*{Remark~7.8}.  Also the paper \cite{Koch-Lueck:graph} was written with the notion to ``give evidence'' for Conjecture~\ref{conj:det-approximation}.  In contrast, B.\,Hayes, in the introduction of~\cite{Hayes:fuglede-kadison}, writes explicitly ``we suspect that this approximation is false in general''.  In view of the mentioned Bergeron--Venkatesh conjecture, it would be particularly interesting to know more about Conjecture~\ref{conj:det-approximation} for lattices in semisimple Lie groups.

\subsection{Background on the counterexample}  Let us now indicate how one could come up with the counterexample in Theorem~\ref{thm:counterexample}.  The historical discussion above gives some hints where one should start looking.  If \(G\) is virtually abelian, there seems to be some hope that Conjecture~\ref{conj:det-approximation} is true for (deep) number theoretical reasons.  So we should step away from abelian groups, but not too far away.  For otherwise, calculation of Fuglede--Kadison determinants could become unfeasible, both on the finite and infinite side.  So the Heisenberg group \(H\) comes to mind as it is not virtually abelian but still close to abelian, being 2-step nilpotent.  Additionally, it has an easy abstract presentation, which is useful for calculating Fuglede--Kadison determinants, as well as a nice matrix realization, which is useful for defining residual chains by congruence subgroups.

When looking for the right matrix \(A\), it is clear that one should start with a single group ring element \(A \in \Z H\) to facilitate computations.  Moreover, as we indicated above, we should make sure that small singular values occur for the \(R_{A_i}\), so the operator \(R_A\) should be injective but not invertible.  For computational purposes, it would be convenient if additionally all the \(R_{A_i}\) were injective (in that case \(R_A\) is injective anyway by L{\"u}ck approximation). We then have the simple formula
\[ \det\nolimits_{\mathcal{R}(G/G_i)} R_{A_i} = |\det\nolimits_\C R_{A_i}|^{\frac{1}{[G : G_i]}} \]
  for the finite approximations as we see from the discussion in~\cite{Kammeyer:l2-invariants}*{p.\,119}.

  It is well-known~\cite{Kammeyer:l2-invariants}*{Proposition~5.47} that the inequality
\[ \limsup_{i \rightarrow \infty} \det\nolimits_{\mathcal{R}(G/G_i)} R_{A_i} \le \det\nolimits_{\mathcal{R}(G)} R_A \]
holds true in general.  So it suits our purpose if we can keep the various \(\det\nolimits_{\mathcal{R}(G/G_i)} R_{A_i}\) small while \(\det\nolimits_{\mathcal{R}(G)} R_A\) is large: in any case \(> 1\).  Since \(\lVert R_{A_i} \rVert\) is uniformly bounded by some \(C > 1\), so are all singular values of \(R_{A_i}\).  It would thus be helpful if we could find large reducing subspaces \(W_i \subset \C (G/G_i)\) for \(R_{A_i}\) (meaning \(W_i\) is invariant under \(R_{A_i}\) and \(R_{A_i}^*\)) on which \(R_{A_i}\) restricts to a \(\C\)-linear map of determinant one.  For then
\[ \left| \det\nolimits_\C R_{A_i}\right| = \left|\det\nolimits_\C R_{A_i}\big|_{W_i^\perp} \right| \le C^{\dim_\C W_i^\perp}, \text{ so } \det\nolimits_{\mathcal{R}(G/G_i)} R_{A_i} \le C^{\frac{\dim_\C W_i^\perp}{[G : G_i]}}. \]
Hence if we manage to make \(\dim_\C W_i^\perp\) really small, there is hope to get this bound close to one.  At this point, a theorem of J.\,Boschheidgen from his recent PhD thesis comes in handy~\cite{Boschheidgen:limit}*{Theorem~1.2}.  We adapt it to our setting.

\begin{theorem}[Boschheidgen, 2024] \label{thm:boschheidgen}
  Let \(H\) be the discrete Heisenberg group, let \(p\) be an odd prime, and let \(H_i\) be the principal congruence subgroup of level \(p^i\).  For \(T = a - b \in \Z H\), we denote by \(d_i\) the dimension of the generalized eigenspace of the eigenvalue zero of the endomorphism
  \[ R_{T_i} \colon \C (H/H_i) \rightarrow \C(H/H_i).\]
  Then
  \[ \lim_{i \rightarrow \infty} \frac{d_i}{[H : H_i]} = \frac{p}{p+1}. \]
\end{theorem}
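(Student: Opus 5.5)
The plan is to decompose the regular representation of the finite group \(Q_i = H/H_i \cong H_3(\Z/p^i\Z)\) into irreducibles. On each irreducible summand I would compute the characteristic polynomial of the image of \(T = a-b\) explicitly. Since \(R_{T_i}\) commutes with the left \(Q_i\)-action, \(\C Q_i\) splits as \(\bigoplus_\pi \pi^{\oplus \dim \pi}\), and \(R_{T_i}\) acts on each isotypic block as \(\pi(T)\) (up to passing to the contragredient, which does not affect the counting). Hence \(d_i = \sum_\pi \dim\pi \cdot m_\pi\), where \(m_\pi\) is the algebraic multiplicity of the eigenvalue \(0\) of \(\pi(a)-\pi(b)\).

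\emph{Step 1: classification of irreducibles.} The center of \(Q_i\) is generated by \(z=[a,b]\) of order \(p^i\), and \(Q_i/Z \cong (\Z/p^i)^2\). There are \(p^{2i}\) one-dimensional characters \(a\mapsto\alpha\), \(b\mapsto\beta\) with \(\alpha,\beta\in\mu_{p^i}\). For each \(1\le k\le i\) and each central character \(\zeta\) of exact order \(N=p^k\) (there are \(p^k-p^{k-1}\) of these), the irreducibles with that central character have dimension \(N\). They are realized on \(\C^N\) with \(\pi(a)=\alpha S\), where \(S\) is the cyclic shift, and \(\pi(b)=c\,D\), where \(D=\operatorname{diag}(\zeta^j)_{j}\). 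The twist parameters \((\alpha,c)\in\mu_{p^i}^2\) are taken modulo \(\mu_{N}^2\), giving \(p^{2(i-k)}\) irreducibles. A dimension count confirms the list is complete:
\[ p^{2i} + \sum_{k=1}^{i}(p^k-p^{k-1})\,p^{2(i-k)}\,p^{2k} = p^{3i}. \]
I would verify this via the standard Stone--von Neumann/Mackey argument over \(\Z/p^i\).

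\emph{Step 2: characteristic polynomials.} For a one-dimensional character, \(T\mapsto \alpha-\beta\) vanishes exactly when \(\alpha=\beta\), which happens for \(p^i\) characters. For the \(N\)-dimensional ones, expanding along the cycle gives
\[ \det(xI-\alpha S+cD)=\prod_{j}(x+c\zeta^j)-\alpha^N = x^N + c^N - \alpha^N. \]
Here both the sign of the \(N\)-cycle and \(\prod_j(x+c\zeta^j)=x^N+c^N\) use that \(N\) is odd, which is where \(p\) odd enters. So \(0\) is either not an eigenvalue at all, or \(\pi(T)\) is nilpotent with \(m_\pi=N\); the latter happens exactly when \(\alpha^N=c^N\). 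Since \(x\mapsto x^N\) identifies \(\mu_{p^i}/\mu_N\) with \(\mu_{p^{i-k}}\), exactly \(p^{i-k}\) of the \(p^{2(i-k)}\) twist classes satisfy this.

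\emph{Step 3: summation.} Combining the counts,
\[ d_i = p^i + \sum_{k=1}^{i}(p^k-p^{k-1})\,p^{i-k}\cdot p^{k}\cdot p^{k} = p^i + \Big(1-\tfrac1p\Big)\sum_{k=1}^{i}p^{i+2k}. \]
The geometric sum is \(\sim p^{3i}\,p^2/(p^2-1)\). Dividing by \([H:H_i]=p^{3i}\) gives the limit
\[ \Big(1-\tfrac1p\Big)\frac{p^2}{p^2-1}=\frac{p}{p+1}. \]

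The main obstacle is Step 1: getting an honest, complete parametrization of the irreducibles of \(H_3(\Z/p^i)\) with exactly the twists \((\alpha,c)\) modulo \(\mu_N^2\) and no double counting, and matching the model \(\pi(a)=\alpha S\), \(\pi(b)=cD\) to the given generators. I would first try inducing characters from the abelian subgroup \(\langle b, z\rangle\cdot\langle a^N\rangle\). I would then check the isomorphism classes by computing characters: the character is supported on elements whose \(a\)- and \(b\)-exponents are divisible by \(N\), where it is determined by \(\alpha^N\), \(c^N\) and \(\zeta\).
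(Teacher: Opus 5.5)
Your argument is correct, but it is not the paper's route. In fact the paper does not prove this theorem at all: it cites Boschheidgen and reproves only the piece it needs, Proposition~\ref{prop:nilpotent}. There \(\C(H/H_i)\) is split only into the eigenspaces \(V_\zeta\) of the central element \(c\). Nilpotency of \(T\) on the top layer (\(\zeta\) of exact order \(q=p^i\)) then comes from the group ring identity \((a-b)^q=\prod_j(1-hc^{-j})\) together with \(h^q=1\).

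You instead go all the way down to the Wedderburn blocks and read off the characteristic polynomial \(x^N+\gamma^N-\alpha^N\) of \(\pi(a-b)\) in the clock-and-shift model. (I write \(\gamma\) for your twist parameter \(c\) to avoid a clash with the central element.) Your counts check out and give \(d_i=p^i+(1-\frac1p)\sum_{k=1}^i p^{i+2k}\), for instance \(d_i=(3^{3i+1}+3^i)/4\) when \(p=3\), hence the limit \(p/(p+1)\).

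The two mechanisms are the same in disguise. By the \(q\)-binomial theorem your model satisfies \(\pi(T)^N=(\alpha^N-\gamma^N)I\). The paper's identity, run on \(V_\zeta\) for \(\zeta\) of exact order \(N\), shows that \((a-b)^N\) acts there as \(R_{a^N}-R_{b^N}\), because \(h^N=a^{-N}b^Nc^{N(N-1)/2}\) and \(N\) is odd. Your layer \(k=i\), where \(\alpha^N=\gamma^N=1\) automatically, is exactly Proposition~\ref{prop:nilpotent}.

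What each route buys:
\begin{itemize}
\item The paper's route is shorter and avoids classifying irreducibles. It only yields \(d_i\ge(1-\frac1p)p^{3i}\), but that is all the bound \(\sqrt[3]{5}\) requires.
\item Your route gives \(d_i\) exactly, together with the dichotomy that every \(\pi(T)\) is either nilpotent or invertible. So the full generalized zero eigenspace is a sum of Wedderburn blocks, hence reducing for \(M_q\). Inserting it into the estimate of Section~\ref{section:finite-determinants} would sharpen that bound to \(\sqrt[4]{5}\).
\end{itemize}

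Two minor remarks. First, in your cycle expansion the term \((-1)^{N-1}(-\alpha)^N\) equals \(-\alpha^N\) for every \(N\), so oddness of \(N\) is only needed for \(\prod_j(x+\gamma\zeta^j)=x^N+\gamma^N\). Second, \(\langle b,z,a^N\rangle\) is abelian only modulo \(z^N\), since \([a^N,b]=z^{\pm N}\). You need no induction at all, though: irreducibility of the clock-and-shift models, your character computation separating the classes, and your dimension count already prove that the list of irreducibles is complete.
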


Boschheidgen therefore calls \(a-b \in \Z H\) an ``almost nilpotent'' element.  So we set
\[ A = 1 - 2(a-b) \in \Z H \]
and all our requests are met.  By Boschheidgen's theorem, \(R_{A_i} = I - 2R_{T_i}\) is unipotent on a large invariant portion of \(\C (H/H_i)\) where it thus restricts to an operator of determinant one.  The number ``2'' serves a double purpose.  Firstly, it makes sure that each \(R_{A_i}\) is invertible because the determinant is an odd integer.  Secondly, we will see that by a formula of Deninger, the ``2'' has the effect that \(\dim_{\mathcal{R}(H)} R_A = 2\).  Finally, the injective operator \(R_A\) is not invertible as one can see with a little effort from the methods in~\cite{Goell-et-al:wiener}.

The combination of these observations makes \(A\) an ideal candidate for a counterexample to Conjecture~\ref{conj:det-approximation}.  And indeed, we will show that this element \(A\) does the trick even for the residual chain \((H_i)\) with \(p=3\) and even without using the full generalized zero eigenspace of \(T_i\) for our estimates.

\subsection{Disclosure of AI assistance} Incidentally, I was a member of Jan Boschheidgen's PhD defense committee in Madrid in June 2023 upon invitation from his supervisor A.\,Jaikin-Zapirain.  When Jan was presenting his Theorem~\ref{thm:boschheidgen}, I remember thinking that this could be useful for the determinant approximation conjecture.  But during the conference on profinite rigidity the next week, the idea got lost in the shuffle.  Coming back to the conjecture recently, I found the counterexample with the help of OpenAI ChatGPT 6 Astra Pro after a long and detailed prompt including background material and potential strategies.  The model explicitly pointed out the relevance of Theorem~\ref{thm:boschheidgen} for the argument.  I second that and want to say that a large part of the credit is neither due to OpenAI nor to myself but to Jan Boschheidgen.  ChatGPT was additionally used to answer background questions but not for structuring or wording the article.  It was entirely written by myself.  I verified all arguments and remain responsible for the correctness of the text.  

\subsection{Outline of the article} The proof of Theorem~\ref{thm:counterexample} will be given in Sections~\ref{section:finite-determinants} and~\ref{section:infinite-determinant}.  Section~\ref{section:uniform-integrability-failure} debates the failure of uniform integrability in our example.  The proof of Theorem~\ref{thm:torsion-approx-failure} is presented in the final Section~\ref{section:torsion-approx-failure}.

\subsection{Acknowledgements}

I want to thank A.\,Jaikin-Zapirain and J.\,Boschheidgen for the invitation and the presentation of the thesis results in Madrid, W.\,L{\"u}ck for sparking my interest in Conjecture~\ref{conj:det-approximation} while I was a postdoc in Bonn, H.\,R{\"u}ping with whom I had run (inconclusive) computer experiments on \(\ell^2\)-torsion approximation at the time, and C.\,Deninger for answering questions on his calculation of Fuglede--Kadison determinants over the Heisenberg group.  This work was partially funded by the RTG ``Algebro-Geometric Methods in Algebra, Arithmetic, and Topology'' (DFG 284078965).

\section{The finite determinants} \label{section:finite-determinants}

In this section, we show that \(\det_{\mathcal{R}(H/H_i)} R_{A_i} \le \sqrt[3]{5}\) for all \(i \ge 1\).  Note that setting \(T = R_{a-b}\), the operator \(R_A\) is given by \(R_A = I - 2T\) where here and later \(I\) will always denote the respective identity operator.  Observe that the commutator \(c = a^{-1} b^{-1} a b\) is the matrix
\(c = \left(\begin{smallmatrix}
1 & 0 & 1\\
0 & 1 & 0\\
0 & 0 & 1
\end{smallmatrix}\right)\)
generating the center of \(H\).          

\medskip
Now fix \(i \ge 1\) and write for brevity \(q = 3^i\) and
\[ Q = H / H_i \cong H_3(\Z / q \Z), \]
so \(|Q| = q^3\).  For the moment, we will abuse notation and denote the images of \(a\), \(b\), \(c\) in \(Q\) by the same letters.  Correspondingly, \(T_q = R_{a-b}\) is an endomorphism of the finite-dimensional complex vector space \(\C Q\) and so is \(M_q = I - 2 T_q = R_{A_i}\).

\begin{proposition} \label{prop:det-m}
  The endomorphism \(M_q\) is invertible and we have
\[ \det\nolimits_{\mathcal{R}(Q)} M_q = \sqrt[q^3]{\left|\det\nolimits_\C M_q\right|}. \]
\end{proposition}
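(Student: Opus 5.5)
Two things need proving: that \(M_q\) is invertible, and that the Fuglede--Kadison determinant over the finite group \(Q\) equals the normalized absolute value of the ordinary determinant.

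\emph{Invertibility.} In the standard basis \(Q \subset \C Q\), the operator \(T_q = R_{a-b}\) is represented by an integer matrix: right multiplication by \(a\) and by \(b\) are permutation matrices. Hence \(M_q = I - 2T_q\) is an integer matrix congruent to the identity modulo \(2\). It follows that
\[ \det\nolimits_\C M_q \equiv \det I = 1 \pmod 2, \]
so \(\det_\C M_q\) is an odd integer, in particular nonzero. Therefore \(M_q\) is invertible.

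\emph{The determinant formula.} We compute the spectral measure of \(|M_q|\) with respect to the von Neumann trace of \(\mathcal{R}(Q)\). For the unit vector \(e = 1 \in Q\) and any operator \(S\) in the commutant of the left \(Q\)-action, equivariance gives \(\langle g, S g\rangle = \langle e, S e\rangle\) for every \(g \in Q\). Hence
\[ \langle e, S e\rangle = \frac{1}{|Q|}\operatorname{tr}_\C S. \]
This applies to \(S = f(|M_q|)\), since \(|M_q|\) is again right multiplication by an element of the group von Neumann algebra.

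Let \(s_1, \dots, s_{q^3}\) be the singular values of \(M_q\), listed with multiplicity; they are the eigenvalues of \(|M_q|\). The spectral measure is then
\[ \mu_{|M_q|} = q^{-3}\sum_j \delta_{s_j}. \]
All \(s_j > 0\) because \(M_q\) is invertible, so no mass sits at \(0\). Therefore
\[ \int_{\sigma(|M_q|)^+} \log \,\textup{d}\mu_{|M_q|} = q^{-3}\sum_j \log s_j = q^{-3}\log\det\nolimits_\C|M_q|. \]

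Finally, \(\det_\C|M_q| = \sqrt{\det_\C(M_q^*M_q)} = |\det_\C M_q|\). Exponentiating gives
\[ \det\nolimits_{\mathcal{R}(Q)} M_q = \sqrt[q^3]{\left|\det\nolimits_\C M_q\right|}. \]

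Neither step is a real obstacle. The only point needing care is the identification of the trace functional \(\langle e, f(|M_q|)e\rangle\) with the normalized matrix trace, which rests on the \(Q\)-equivariance of \(f(|M_q|)\).
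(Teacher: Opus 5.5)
Your proposal is correct and is essentially the paper's proof: invertibility follows from the same observation that \(M_q\) is an integer matrix congruent to \(I\) modulo \(2\), so \(\det\nolimits_\C M_q\) is odd. For the determinant formula the paper only cites the standard treatment of Fuglede--Kadison determinants over finite groups, noting that all singular values are positive; your computation of \(\mu_{|M_q|}\) as the normalized counting measure on the singular values, via \(\langle e, Se\rangle = |Q|^{-1}\operatorname{tr}_\C S\) for \(Q\)-equivariant \(S\), spells out exactly the content of that citation.
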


\begin{proof}
  The transformation matrix of \(T_q\) with respect to the basis \(Q\) has integer coefficients.  Hence so does the transformation matrix of \(M_q\) which is in fact congruent to the unit matrix mod~2.  Thus the usual determinant of \(M_q\) is an odd integer, whence \(M_q\) is invertible.  The second part of the proposition now follows from the discussion of Fuglede--Kadison determinants over finite groups in~\cite{Kammeyer:l2-invariants}*{p.\,119} because \(|\det_\C M_q|\) is the product of the singular values of \(M_q\) and all singular values are positive as \(M_q\) is invertible.
\end{proof}

The element \(c \in Q\) has order \(q\), so it acts by right multiplication on \(\C Q\) as an automorphism of order \(q\).  As such, it is diagonalizable and we obtain the eigenspace decomposition
\[ \C Q = \bigoplus_{\zeta^q = 1} V_\zeta, \qquad V_\zeta = \{ v \in \C Q \colon vc = \zeta v \}. \]

\begin{proposition} \label{prop:decomposition}
  The decomposition
  \[ \C Q = \bigoplus_{\zeta^q = 1} V_\zeta \]
  is orthogonal and consists of reducing subspaces for \(T_q\) and \(M_q\), each of dimension \(q^2\).
\end{proposition}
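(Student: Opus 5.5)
The plan is to use that right multiplication by the central element \(c\) is a unitary operator on \(\C Q\) which commutes with every left and every right multiplication by group elements. Write \(R_c\) for right multiplication by \(c\). For the orthogonality claim, note that \(R_c\) permutes the orthonormal basis \(Q\) of \(\C Q\), so it is unitary. Eigenspaces of a unitary (hence normal) operator belonging to distinct eigenvalues are orthogonal. Diagonalizability was already observed above: \(R_c^q = I\), and \(X^q-1\) has distinct roots. So \(\C Q\) is the orthogonal direct sum of the \(V_\zeta\).

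For the reducing property, I will use that \(c\) is central in \(Q\). Then \(R_c\) commutes with \(R_a\) and \(R_b\), since \(vac = vca\) and likewise for \(b\). Hence it commutes with \(T_q = R_a - R_b\) and with \(M_q = I - 2T_q\). So each \(V_\zeta\) is invariant under \(T_q\) and \(M_q\).

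For the adjoints, note that \(R_g^* = R_{g^{-1}}\), because \(R_g\) is unitary as a permutation of the basis. Thus \(T_q^* = R_{a^{-1}} - R_{b^{-1}}\), which again commutes with \(R_c\), and the same holds for \(M_q^*\). So each \(V_\zeta\) is also invariant under the adjoints, i.e. it is a reducing subspace. Alternatively, invariance of all members of an orthogonal decomposition under an operator already forces invariance under its adjoint.

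The dimension count is the step that needs actual work, and I would handle it by a transversal argument. Let \(Z = \langle c \rangle\), which is a cyclic central subgroup of order \(q\) in \(Q\), so \(\C Q\) is a free right \(\C Z\)-module of rank \(q^3/q = q^2\). Pick coset representatives \(g_1, \dots, g_{q^2}\) for \(Q/Z\). For each \(k\), set \(v_k = \sum_{j=0}^{q-1} \zeta^{-j} g_k c^j\). Then \(v_k c = \zeta v_k\), and the \(v_k\) are nonzero with pairwise disjoint supports, so \(\dim V_\zeta \ge q^2\). There are \(q\) distinct \(q\)-th roots of unity \(\zeta\), and the sum of the dimensions equals \(q^3\), so every \(V_\zeta\) has dimension exactly \(q^2\).

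One point requires care: that \(c\) really has order \(q\) in \(H_3(\Z/q\Z)\). This is clear from its matrix, whose upper right entry is \(1\) and has additive order \(q\).
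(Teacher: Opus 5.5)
Your proof is correct and follows the paper's argument. Orthogonality comes from unitarity of \(R_c\), the reducing property from centrality of \(c\) together with \(R_g^* = R_{g^{-1}}\), and the dimension count from the \(q^2\) cosets of \(Z = \langle c \rangle\); your explicit eigenvectors \(v_k\) plus the total-dimension count are just a concrete version of the paper's remark that each coset spans a copy of the regular representation of \(Z\) contributing exactly one \(\zeta\)-eigenline.
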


\begin{proof}
  The element \(c\) acts as a unitary hence normal transformation and the decomposition consists of eigenspaces with pairwise different eigenvalues, so the decomposition is orthogonal.  To see that each summand has dimension \(q^2\), we decompose \(Q\) as the disjoint union of the left cosets of \(Z = \langle c \rangle\) to see that the right \(Z\)-action on \(\C Q\) decomposes into \([Q : Z] = q^2\) copies of the right regular representation of \(Z\).  Each such copy is a direct sum of \(q\) one-dimensional eigenspaces of \(c\) with eigenvalues the \(q\) different \(q\)-th roots of unity.  Hence for each \(\zeta\) with \(\zeta^q = 1\), the subspace \(V_\zeta\) picks up a one-dimensional subspace from each copy of the regular \(Z\)-representation, whence \(\dim_\C V_\zeta = q^2\).

  Since \(c\) is central in \(Q\), the right multiplication operators of \(c\) and of any \(x \in \C Q\) commute, hence \(T_q = R_{a-b}\) and \(M_q = I - 2T_q\) preserve the eigenspace decomposition of \(c\) acting on \(\C Q\).  The adjoints are given by \(T_q^* = R_{a^{-1} -b^{-1}}\) and \(M_q^* = I - 2T_q^*\), so these operators preserve the decomposition as well.
\end{proof}

The proposition says in particular that \(T_q\) and \(M_q\) have block form with respect to the decomposition.  Next we verify that \(T_q\) is nilpotent of degree \(q\) on each block \(V_\zeta\) such that \(\zeta\) is primitive.  The argument is extracted from \cite{Boschheidgen:limit}*{Proposition~4.1}.

\begin{proposition} \label{prop:nilpotent}
  For every primitive \(q\)-th root of unity \(\zeta\), we have
  \[ T_q^q\big|_{V_\zeta} = 0. \]
\end{proposition}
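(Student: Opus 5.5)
The plan is to reduce the claim to an identity in a quotient of the group algebra, namely a \(q\)-commutative binomial theorem. Since right multiplication reverses products, \(R_x R_y = R_{yx}\), so
\[ T_q^q = (R_{a-b})^q = R_{(a-b)^q}. \]
It therefore suffices to show \(v (a-b)^q = 0\) for every \(v \in V_\zeta\). By Proposition~\ref{prop:decomposition}, each \(V_\zeta\) is invariant under right multiplication by all of \(\C Q\), because \(c\) is central. On \(V_\zeta\) the element \(c\) acts as the scalar \(\zeta\). Hence the right action of \(\C Q\) on \(V_\zeta\) factors through the quotient algebra \(\C Q/(c-\zeta)\), where \((c-\zeta)\) is a two-sided ideal since \(c\) is central. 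Concretely, any word \(w = a^k b^l c^m\) satisfies \(v w = \zeta^m \, v a^k b^l\), since \(v a^k b^l \in V_\zeta\).

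The first step is to record the commutation rule in this quotient. From \(c = a^{-1}b^{-1}ab\) we get \(ab = bac\), hence \(ba = ab\,c^{-1}\). Modulo \(c - \zeta\) this becomes
\[ y x = \omega\, x y, \qquad x = a,\ y = -b,\ \omega = \zeta^{-1}, \]
and \(\omega\) is again a primitive \(q\)-th root of unity.

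The second step applies the quantum binomial theorem for \(\omega\)-commuting variables:
\[ (x+y)^q = \sum_{k=0}^q \binom{q}{k}_{\!\omega} x^k y^{q-k}. \]
This is proved by the usual induction using the Pascal-type recursion for Gaussian binomials.

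The third step, which I expect to be the only delicate point, is the vanishing of the middle coefficients at a root of unity. For \(0<k<q\) we have
\[ \binom{q}{k}_{\!\omega} = \prod_{j=1}^{k} \frac{1-\omega^{q-k+j}}{1-\omega^{j}}. \]
The denominators are nonzero because \(1 \le j \le k < q\) and \(\omega\) is primitive. The numerator contains the factor \(j=k\), which equals \(1-\omega^q = 0\). To avoid dividing by zero in the derivation, I would work with Gaussian binomials as polynomials in a formal variable and specialize only afterwards. Alternatively, one can argue directly: the coefficient is a sum over words, and it can be computed as the polynomial \(\binom{q}{k}_t\) evaluated at \(t=\omega\). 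Since \(\Phi_q(t)\) divides this polynomial, the value at \(\omega\) is zero.

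Finally, the binomial identity gives
\[ (a-b)^q \equiv a^q + (-b)^q = a^q - b^q \pmod{c-\zeta}, \]
using that \(q=3^i\) is odd. In \(Q = H_3(\Z/q\Z)\) the elements \(a\) and \(b\) have order \(q\), since \(a^n\) is the unipotent matrix with entry \(n\) in position \((1,2)\), and similarly for \(b\). Hence \(a^q = b^q = 1\), so \((a-b)^q\) acts as \(1-1=0\) on \(V_\zeta\). This yields \(T_q^q|_{V_\zeta}=0\).
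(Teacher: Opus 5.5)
Your proof is correct, but it expands the \(q\)-th power differently from the paper. Both arguments work on \(V_\zeta\), where \(c\) acts as the scalar \(\zeta\), and show that \((a-b)^q\) acts as zero there.

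The paper factors \(a-b = a(1-h)\) with \(h = a^{-1}b\). It uses \(ha = ahc^{-1}\) to move every \(a\) to the left, which gives \((a-b)^q = a^q\prod_{j=0}^{q-1}(1-hc^{-j})\). On \(V_\zeta\) this lives in the commutative algebra generated by \(R_h\). The factorization \(\prod_{j}(1-\zeta^{-j}X) = 1-X^q\) then reduces everything to checking \(h^q = 1\) in \(H_3(\Z/q\Z)\). That check is an explicit matrix computation, and oddness of \(q\) enters through the entry \(-q(q+1)/2\).

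You instead expand \((a+(-b))^q\) directly in \(\C Q/(c-\zeta)\) with the \(\omega\)-binomial theorem. The middle terms die because Gaussian binomials vanish at a primitive \(q\)-th root of unity. Afterwards you only need \(a^q = b^q = 1\), which is immediate, and oddness of \(q\) enters through the sign \((-b)^q = -b^q\).

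The paper's route is more elementary: it needs no Gaussian binomials and no care about specializing rational functions at \(\omega\). Your route proves the more conceptual identity \((x+y)^q = x^q + y^q\) for \(\omega\)-commuting variables. It shows at once that for any \(\alpha, \beta \in \C\), the element \((\alpha a + \beta b)^q\) acts on \(V_\zeta\) as the scalar \(\alpha^q + \beta^q\). The paper's factorization \(a-b = a(1-h)\) is in fact the standard alternative proof of that same identity, turning the noncommutative binomial into a commutative product.
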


\begin{proof}
  Rewrite the Heisenberg relation \(ab=bac\) as \(a^{-1} b a = bc^{-1}\).  Setting \(h = a^{-1}b\), we thus see that \(ha = ahc^{-1}\), so ``moving \(a\) in front of \(h\) comes at the expense of one \(c^{-1}\)''.  This shows that in the group algebra \(\C Q\), we have the identity
  \[ (a-b)^q = (a(1-h))^q = a^q \prod_{j=0}^{q-1} (1 -hc^{-j}) = \prod_{j=0}^{q-1} (1 -hc^{-j}) \]
  because clearly \(a^q = 1\) and because we could reorder the product as all factors commute by centrality of~\(c\).  We thus see that \((a-b)^q\) acts on \(V_\zeta\) as the operator
  \[ \prod_{j=0}^{q-1} \left(1 -\zeta^{-j} R_h\right). \]
  Since \(\zeta\) is primitive and \(q\) is odd, we have the polynomial identity
  \[ \prod_{j=0}^{q-1}(1 - \zeta^{-j} X) = 1 - X^q. \]
  Thus \((a-b)^q\) acts in fact as \(I - R_h^q\) on \(V_\zeta\).  But the quick integer matrix calculation
\[
\left(\left(\begin{smallmatrix}
1 & 1 & 0\\
0 & 1 & 0\\
0 & 0 & 1
\end{smallmatrix}\right)^{-1} \!\!\cdot
\left(\begin{smallmatrix}
1 & 0 & 0\\
0 & 1 & 1\\
0 & 0 & 1
\end{smallmatrix}\right)\right)^q =
\left(\begin{smallmatrix}
1 & -q & -\frac{q(q+1)}{2} \\
0 & 1 & q \\
0 & 0 & 1
\end{smallmatrix}\right)
\]
shows that also \(h^q = 1\) because \(q\) is odd.  Thus \((a-b)^q\) acts as the zero operator and the proof is complete.
\end{proof}

We have now collected all necessary preliminaries to estimate the determinants.  First note that there are \(\varphi(q) = \frac{2q}{3}\) primitive \(q\)-th roots of unity.  So the reducing subspace
\[ W_q = \!\bigoplus_{\zeta^q = 1 \text{ primitive}}\! V_\zeta \]
of \(M_q\) and \(T_q\) has dimension \(\frac{2q^3}{3}\) by Proposition~\ref{prop:decomposition}.  Since \(T_q\) is nilpotent on \(W_q\) by Proposition~\ref{prop:nilpotent}, we conclude that \(M_q\big|_{W_q}\) is unipotent, hence
\begin{equation} \label{eq:det-w} \det\nolimits_\C \,M_q\big|_{W_q} = 1. \end{equation}

On the orthogonal complement \(W_q^\perp\), we use a crude estimate for the determinant.  We observe that \(M_q\) is by definition a sum of five unitaries, hence the operator norm satisfies \(\lVert M_q \rVert \le 5\) by the triangle inequality.  Thus every singular value of \(M_q\big|_{W_q^\perp}\) is at most five and since \(\dim_\C W_q^\perp = \frac{q^3}{3}\), we obtain
\begin{equation} \label{eq:det-w-perp} \left| \det\nolimits_\C \,M_q\big|_{W_q^\perp} \right| \le 5^{\frac{q^3}{3}}. \end{equation}
Propositions~\ref{prop:det-m} and~\ref{prop:decomposition} and Equations~\eqref{eq:det-w} and~\eqref{eq:det-w-perp} thus show that
\[ \det\nolimits_{\mathcal{R}(H/H_i)} R_{A_i} = \det\nolimits_{\mathcal{R}(Q)} M_q \le \sqrt[3]{5}. \]
  Since \(i \ge 1\) was arbitrary, this also shows that
\[ \limsup_{i \rightarrow \infty} \det\nolimits_{\mathcal{R}(H/H_i)} R_{A_i}  \le \sqrt[3]{5}. \]
  
\section{The infinite determinant} \label{section:infinite-determinant}

We now show \(\det_{\mathcal{R}(H)} R_A = 2\).  To do so, we return to the original meaning of \(a\), \(b\), \(c\) as elements in the discrete Heisenberg group~\(H\).  To calculate the Fuglede--Kadison determinant of the operator \(R_A\), we rely on a result of C.\,Deninger~\cite{Deninger:determinants}*{Theorem~11\,(a)}.  It states that for the abstractly presented Heisenberg group
\[ H = \left\langle x, y, z \,\big|\, y^{-1}x^{-1}yxz^{-1}, \ x^{-1}z^{-1}xz, \ y^{-1}z^{-1}yz \right\rangle \]
and for any complex polynomial \(P(z_1, z_2)\) in two commuting variables, the element \(B = 1 -P(y,z)x \in \C H\) satisfies
\[ \log \det\nolimits_{\mathcal{R}(H)} R_B = \int_{S^1} \left(\int_{S^1} \log |P(z_1,z_2)| \,\textup{d}\mu(z_1)\right)^+ \textup{d}\mu(z_2). \]
Here \((\,\cdot\,)^+ = \max\{0, \,\cdot\,\}\) and \(\mu\) denotes the normalized Haar measure on the circle~\(S^1 \subset \C\).  We remark that in this formula, as opposed to our Definition~\ref{def:determinant}, the appearing Fuglede--Kadison determinant is understood in the original sense so that zero is included in the domain of integration.  But since we already know from Proposition~\ref{prop:det-m} and L{\"u}ck approximation that \(R_A\) is injective, the two definitions coincide for \(R_A\).

One readily checks that setting \(x = a\) and \(y = ba^{-1}\), hence \(z = ab^{-1}a^{-1}b = c^{-1}\) realizes the abstract presentation of \(H\) from above as our matrix Heisenberg group \(H_3(\Z)\).  Indeed, the relations are satisfied and conversely, mapping \(a\) to \(x\) and \(b\) to \(yx\) defines an inverse because the two relations saying that \(ab^{-1}a^{-1}b\) is central in \(H_3(\Z)\) are granted under this assignment because \(ab^{-1}a^{-1}b\) maps to \(z\).

Under this identification, our group ring element \(1 -2a +2b\) is the element \(B \in \C H\) arising from the polynomial \(P(z_1, z_2) = 2(1 - z_1)\).  As \(P\) is constant in \(z_2\), the outer integral can be discarded by the normalization of \(\mu\).  The inner integral becomes
\[ \log 2 + \int_{S^1} \log |1-z_1| \,\textup{d}\mu(z_1) \]
and the second summand vanishes as it is just the logarithmic Mahler measure of the polynomial \((z-1)\).  This shows
\[ \det\nolimits_{\mathcal{R}(H)} R_A = 2. \]

\section{Failure of uniform integrability} \label{section:uniform-integrability-failure}
  
In this section, we apply the estimates \(\det\nolimits_{\mathcal{R}(H/H_i)} R_{A_i}  \le \sqrt[3]{5}\) and \(\det\nolimits_{\mathcal{R}(H)} R_A = 2\) to make the violation of L{\"u}ck's \emph{uniform integrability condition}~\cite{Lueck:approximating-survey}*{Theorem~17.1.(v)} explicit in our example.

\smallskip
Let \(A \in M_{n \times m}(\Q G)\) be a matrix such that all \(R_{A_i}\) are injective for some fixed residual chain \((G_i)_{i \ge 0}\).  Let \(\mu = \mu_{|R_A|}\) and \(\mu_i = \mu_{|R_{A_i}|}\) be the associated spectral measures.  Uniform integrability would say that there exists \(\varepsilon > 0\) such that the functions
  \[ f_i \colon (0, \varepsilon] \rightarrow \R_{\ge 0}, \quad f_i(\lambda) = \frac{\mu_i((0,\lambda])}{\lambda} \]
  have a uniform integrable upper bound \(g \colon (0, \varepsilon] \rightarrow \R_{\ge 0}\) for the standard Lebesgue measure on \((0, \varepsilon]\).  However, the Fubini--Tonelli theorem gives
  \begin{align*} \int_{0^+}^\varepsilon f_i(\lambda) \,\textup{d}\lambda &= \int_{0^+}^\varepsilon \left(\int_{(0,\varepsilon]} \frac{\mathbf{1}_{(0, \lambda]}(\nu)}{\lambda}\, \textup{d}\mu_i(\nu) \right) \textup{d}\lambda \\
                                                                       &=  \int_{(0,\varepsilon]} \left(\int_{0^+}^\varepsilon \frac{\mathbf{1}_{(0, \lambda]}(\nu)}{\lambda}\, \textup{d}\lambda  \right) \textup{d}\mu_i(\nu) \\
                                                                       &=  \int_{(0,\varepsilon]} \left(\int_\nu^\varepsilon \frac{1}{\lambda}\, \textup{d}\lambda  \right) \textup{d}\mu_i(\nu) = \int_{(0,\varepsilon]}  \log \frac{\varepsilon}{\nu} \ \textup{d}\mu_i(\nu).
  \end{align*}
  For the latter integral, we have the asymptotic estimate
  \begin{align*} \liminf_{i \rightarrow \infty} &\int_{(0,\varepsilon]}  \log \frac{\varepsilon}{\nu} \ \textup{d}\mu_i(\nu) = \\
                                                & \liminf_{i \rightarrow \infty} \left( \int_{(0,\infty)} \log (\max \{\varepsilon, \nu\}) \,\textup{d}\mu_i(\nu) - \int_{(0,\infty)} \log (\nu) \,\textup{d}\mu_i(\nu)\right).
  \end{align*}
  Now note that the function \(\log (\max\{\varepsilon, \nu\})\) in \(\nu\) (as opposed to \(\log \nu\)) is continuous on a fixed compact interval \([0,C]\) containing the supports of \(\mu\) and all the \(\mu_i\) and we have \(\mu_i(\{0\}) = 0\) hence \(\mu(\{0\}) = 0\) by L{\"u}ck approximation.  Thus weak convergence implies
  \begin{align*} \liminf_{i \rightarrow \infty} &\int_{(0,\varepsilon]}  \log \frac{\varepsilon}{\nu} \ \textup{d}\mu_i(\nu) \ge \\
                                                &\int_{(0, \infty)} \log(\max\{\varepsilon, \nu\}) \ \textup{d}\mu(\nu) - \limsup_{i \rightarrow \infty} \log \det\nolimits_{\mathcal{R}(G/G_i)} R_{A_i}.
  \end{align*}
  By monotonicity of the Lebesgue integral, we can replace the last integral in this inequality with \(\log \det_{\mathcal{R}(G)} R_A\). We conclude
  \[ \liminf_{i \rightarrow \infty} \int_{0^+}^\varepsilon f_i(\lambda) \,\textup{d}\lambda \ge \log \det\nolimits_{\mathcal{R}(G)} R_A - \limsup_{i \rightarrow \infty} \log \det\nolimits_{\mathcal{R}(G/G_i)} R_{A_i} \ge 0. \]
  Assuming that the last inequality is strict, we thus obtain a positive lower bound independent of~\(\varepsilon\).  Clearly, this shows that no integrable dominant function \(g\) exists for the family \((f_i)\).

  \smallskip
  In our example \(A = 1 -2a +2b \in \Z H\), we have seen in Proposition~\ref{prop:det-m} that each \(R_{A_i}\) is injective for the residual chain \((H_i)_{i \ge 0}\).  So the above arguments apply and we obtain
  \[ \liminf_{i \rightarrow \infty} \int_{0^+}^\varepsilon f_i(\lambda) \,\textup{d}\lambda \ge \log 2 - \frac{1}{3} \log 5 \approx 0.156\ldots\,.\]

\section{Failure of torsion approximation} \label{section:torsion-approx-failure}

In this section, we prove Theorem~\ref{thm:torsion-approx-failure}.  So let \(H\) and \(H_i\) be as in Theorem~\ref{thm:counterexample}.  Let \(X\) be the free \(H\)-CW complex that \cite{Kammeyer:l2-invariants}*{Proposition~3.29} associates with \(A = 1 -2a +2b \in \Z H\) and with the two generators \(a, b\) of \(H\).  By construction, \(X\) is connected, \(H \backslash X\) is compact, and the cellular chain complex takes the form
\[ 0 \longrightarrow \Z H \xrightarrow{\ \cdot A \ } \Z H \xrightarrow{\ 0 \ } (\Z H)^2 \xrightarrow{\ \cdot B \ } \Z H \longrightarrow 0. \]
Note that by our definition, we have \(\det_{\mathcal{R}(H)}(0) = 1\), so
\[ \rho^{(2)}(X) = \log \det\nolimits_{\mathcal{R}(H)} R_B +  \log \det\nolimits_{\mathcal{R}(H)} R_A. \]
Since as in~\cite{Kammeyer:l2-invariants}*{p.\,149}, we have
\[ \frac{\rho^{(2)}(\{1\} \curvearrowright H_i \backslash X)}{[H : H_i]} = \rho^{(2)}(H/H_i \curvearrowright H_i \backslash X), \]
we obtain similarly
\[ \frac{\rho^{(2)}(\{1\} \curvearrowright H_i \backslash X)}{[H : H_i]} = \log \det\nolimits_{\mathcal{R}(H/H_i)} R_{B_i} + \log \det\nolimits_{\mathcal{R}(H/H_i)} R_{A_i}. \]
From Theorem~\ref{thm:counterexample}, we have
\[ \det\nolimits_{\mathcal{R}(H)} R_A = 2, \qquad \det\nolimits_{\mathcal{R}(H/H_i)} R_{A_i} \le \sqrt[3]{5}. \]
Additionally, we obtain from~\cite{Kammeyer:l2-invariants}*{Proposition~5.47} and from continuity and monotonicity of the logarithm that
\[ \limsup_{i \rightarrow \infty} \log \det\nolimits_{\mathcal{R}(H/H_i)} R_{B_i} \le \log \det\nolimits_{\mathcal{R}(H)} R_B. \]
Putting pieces together, we conclude
\begin{align*}
  \rho^{(2)}(X)& - \limsup_{i \rightarrow \infty} \frac{\rho^{(2)}(\{1\} \curvearrowright H_i \backslash X)}{[H : H_i]} \ge \\
                  &\log \det\nolimits_{\mathcal{R}(H)} R_A - \limsup_{i \rightarrow \infty} \log \det\nolimits_{\mathcal{R}(H/H_i)} R_{A_i} \ge \\
                  &\log 2 - \log \sqrt[3]{5} = \frac{1}{3} \log \frac{8}{5}.
  \end{align*}


\begin{bibdiv}[References]

  \begin{biblist}
    \bib{Bergeron-Venkatesh:asymptotic-growth}{article}{
   author={Bergeron, Nicolas},
   author={Venkatesh, Akshay},
   title={The asymptotic growth of torsion homology for arithmetic groups},
   journal={J. Inst. Math. Jussieu},
   volume={12},
   date={2013},
   number={2},
   pages={391--447},
   issn={1474-7480},
   review={\MR{3028790}},
%   doi={10.1017/S1474748012000667},
 }

 \bib{Boschheidgen:limit}{article}{
   author={Boschheidgen, Jan},
   title={On limit eigenvalue distributions associated to residual chains of
   groups},
   journal={J. Operator Theory},
   volume={92},
   date={2024},
   number={2},
   pages={439--452},
   issn={0379-4024},
   review={\MR{4849102}},
%   doi={10.7900/jot.2022nov02.2406},
 }
 
 \bib{Bowen:entropy}{article}{
  author={Bowen, Lewis},
  title={Entropy for expansive algebraic actions of residually finite groups},
  journal={Ergodic Theory Dynam. Systems},
  volume={31},
  date={2011},
  number={3},
  pages={703--718},
 review={\MR{2794944}},
}

\bib{Bowen-Li:harmonic}{article}{
   author={Bowen, Lewis},
   author={Li, Hanfeng},
   title={Harmonic models and spanning forests of residually finite groups},
   journal={J. Funct. Anal.},
   volume={263},
   date={2012},
   number={7},
   pages={1769--1808},
   issn={0022-1236},
   review={\MR{2956925}},
%   doi={10.1016/j.jfa.2012.06.015},
 }

 \bib{Deitmar:geometric-zeta}{article}{
    author={Deitmar, Anton},
    title={Geometric Zeta Functions, \(L^2\)-Theory, and Compact Shimura Manifolds},
   date={1995},
   review={\arXiv{math/9503216}},
 }

\bib{Deninger:determinants}{article}{
   author={Deninger, Christopher},
   title={Determinants on von Neumann algebras, Mahler measures and Ljapunov
   exponents},
   journal={J. Reine Angew. Math.},
   volume={651},
   date={2011},
   pages={165--185},
   issn={0075-4102},
   review={\MR{2774314}},
%   doi={10.1515/CRELLE.2011.012},
}

\bib{Deninger:entropy}{article}{
   author={Deninger, Christopher},
   title={Fuglede-Kadison determinants and entropy for actions of discrete
   amenable groups},
   journal={J. Amer. Math. Soc.},
   volume={19},
   date={2006},
   number={3},
   pages={737--758},
   issn={0894-0347},
   review={\MR{2220105}},
%   doi={10.1090/S0894-0347-06-00519-4},
 }

\bib{Deninger:Mahler-measures}{article}{
  author={Deninger, Christopher},
  title={Mahler measures and Fuglede--Kadison determinants},
  journal={M{\"u}nster J. Math.},
  volume={2},
  date={2009},
  pages={45--63},
  review={\MR{2545607}},
}

\bib{Deninger-Schmidt:expansive}{article}{
   author={Deninger, Christopher},
   author={Schmidt, Klaus},
   title={Expansive algebraic actions of discrete residually finite amenable
   groups and their entropy},
   journal={Ergodic Theory Dynam. Systems},
   volume={27},
   date={2007},
   number={3},
   pages={769--786},
   issn={0143-3857},
   review={\MR{2322178}},
%  doi={10.1017/S0143385706000939},
 }

\bib{Goell-et-al:wiener}{article}{
   author={G\"oll, Martin},
   author={Schmidt, Klaus},
   author={Verbitskiy, Evgeny},
   title={A Wiener lemma for the discrete Heisenberg group},
   journal={Monatsh. Math.},
   volume={180},
   date={2016},
   number={3},
   pages={485--525},
   issn={0026-9255},
   review={\MR{3513217}},
%   doi={10.1007/s00605-016-0894-0},
}

 \bib{Grabowski:large}{article}{
   author={Grabowski, \L ukasz},
   title={Group ring elements with large spectral density},
   journal={Math. Ann.},
   volume={363},
   date={2015},
   number={1-2},
   pages={637--656},
   issn={0025-5831},
   review={\MR{3394391}},
%   doi={10.1007/s00208-015-1170-7},
 }

 \bib{Hayes:fuglede-kadison}{article}{
   author={Hayes, Ben},
   title={Fuglede-Kadison determinants and sofic entropy},
   journal={Geom. Funct. Anal.},
   volume={26},
   date={2016},
   number={2},
   pages={520--606},
   issn={1016-443X},
   review={\MR{3513879}},
%   doi={10.1007/s00039-016-0370-y},
 }
 
 \bib{Jaikin:l2-betti}{article}{
   author={Jaikin-Zapirain, Andrei},
   title={$L^2$-Betti numbers and their analogues in positive
   characteristic},
   conference={
      title={Groups St Andrews 2017 in Birmingham},
   },
   book={
      series={London Math. Soc. Lecture Note Ser.},
      volume={455},
%      publisher={Cambridge Univ. Press, Cambridge},
   },
   isbn={978-1-108-72874-4},
   date={2019},
   pages={346--405},
   review={\MR{3931420}},
 }

\bib{Kammeyer:l2-invariants}{book}{
   author={Kammeyer, Holger},
   title={Introduction to $\ell^2$-invariants},
   series={Lecture Notes in Mathematics},
   volume={2247},
   publisher={Springer, Cham},
   date={2019},
   pages={viii+181},
   isbn={978-3-030-28296-7},
   isbn={978-3-030-28297-4},
   review={\MR{3971279}},
%   doi={10.1007/978-3-030-28297-4},
 }
 
\bib{Kammeyer:novikov-shubin}{article}{
   author={Kammeyer, Holger},
   title={Approximating Novikov-Shubin numbers of virtually cyclic
   coverings},
   journal={Groups Geom. Dyn.},
   volume={11},
   date={2017},
   number={4},
   pages={1231--1251},
   issn={1661-7207},
   review={\MR{3737281}},
%   doi={10.4171/GGD/427},
 }
 
 \bib{Kerr-Li:entropy}{article}{
   author={Kerr, David},
   author={Li, Hanfeng},
   title={Entropy and the variational principle for actions of sofic groups},
   journal={Invent. Math.},
   volume={186},
   date={2011},
   number={3},
   pages={501--558},
   issn={0020-9910},
   review={\MR{2854085}},
%   doi={10.1007/s00222-011-0324-9},
 }

 \bib{Kirstein-et-al:problems}{article}{
   author={Kirstein, Dominik},
   author={Kremer, Christian},
   author={L\"uck, Wolfgang},
   title={Some problems and conjectures about $L^2$-invariants},
   conference={
      title={Geometry and topology of aspherical manifolds},
   },
   book={
      series={Contemp. Math.},
      volume={816},
      publisher={Amer. Math. Soc., Providence, RI},
   },
   isbn={978-1-4704-7495-9},
   isbn={[9781470478698]},
   date={2025},
   pages={3--43},
   review={\MR{4885752}},
%   doi={10.1090/conm/816/16349},
}

 \bib{Koch-Lueck:graph}{article}{
   author={Koch, Herbert},
   author={L\"uck, Wolfgang},
   title={On the spectral density function of the Laplacian of a graph},
   journal={Expo. Math.},
   volume={32},
   date={2014},
   number={2},
   pages={178--189},
   issn={0723-0869},
   review={\MR{3206650}},
%   doi={10.1016/j.exmath.2013.09.001},
 }

 \bib{Le:growth}{article}{
   author={L\^e, Thang T. Q.},
   title={Growth of homology torsion in finite coverings and hyperbolic
   volume},
%   language={English, with English and French summaries},
   journal={Ann. Inst. Fourier (Grenoble)},
   volume={68},
   date={2018},
   number={2},
   pages={611--645},
   issn={0373-0956},
   review={\MR{3803114}},
%   doi={10.5802/aif.3173},
 }

 \bib{Li:compact}{article}{
   author={Li, Hanfeng},
   title={Compact group automorphisms, addition formulas and Fuglede-Kadison
   determinants},
   journal={Ann. of Math. (2)},
   volume={176},
   date={2012},
   number={1},
   pages={303--347},
   issn={0003-486X},
   review={\MR{2925385}},
%   doi={10.4007/annals.2012.176.1.5},
}
 
 \bib{Li-Thom:entropy}{article}{
   author={Li, Hanfeng},
   author={Thom, Andreas},
   title={Entropy, determinants, and $L^2$-torsion},
   journal={J. Amer. Math. Soc.},
   volume={27},
   date={2014},
   number={1},
   pages={239--292},
   issn={0894-0347},
   review={\MR{3110799}},
%   doi={10.1090/S0894-0347-2013-00778-X},
 }
 
 \bib{Lueck:homology-growth}{article}{
   author={L\"uck, Wolfgang},
   title={Approximating $L^2$-invariants and homology growth},
   journal={Geom. Funct. Anal.},
   volume={23},
   date={2013},
   number={2},
   pages={622--663},
   issn={1016-443X},
   review={\MR{3053758}},
%  doi={10.1007/s00039-013-0218-7},
 }
 
\bib{Lueck:approximating-survey}{article}{
   author={L\"uck, Wolfgang},
   title={Approximating $L^2$-invariants by their classical counterparts},
   journal={EMS Surv. Math. Sci.},
   volume={3},
   date={2016},
   number={2},
   pages={269--344},
   issn={2308-2151},
   review={\MR{3576534}},
%   doi={10.4171/EMSS/18},
 }

 \bib{Lueck:lueck-approximation}{article}{
   author={L\"uck, Wolfgang},
   title={Approximating $L^2$-invariants by their finite-dimensional
   analogues},
   journal={Geom. Funct. Anal.},
   volume={4},
   date={1994},
   number={4},
   pages={455--481},
   issn={1016-443X},
   review={\MR{1280122}},
%  doi={10.1007/BF01896404},
 }
 
\bib{Lueck:l2-invariants}{book}{
   author={L\"uck, Wolfgang},
   title={$L^2$-invariants: theory and applications to geometry and
   $K$-theory},
   series={Ergebnisse der Mathematik und ihrer Grenzgebiete. 3. Folge. A
   Series of Modern Surveys in Mathematics},
   volume={44},
   publisher={Springer-Verlag, Berlin},
   date={2002},
   pages={xvi+595},
   isbn={3-540-43566-2},
   review={\MR{1926649}},
%   doi={10.1007/978-3-662-04687-6},
}

\bib{Lueck:lehmer}{article}{
   author={L\"uck, Wolfgang},
   title={Lehmer's problem for arbitrary groups},
   journal={J. Topol. Anal.},
   volume={14},
   date={2022},
   number={4},
   pages={901--932},
   issn={1793-5253},
   review={\MR{4523563}},
%   doi={10.1142/S1793525321500035},
 }

 \bib{Lueck:survey-l2-3}{article}{
   author={L\"uck, Wolfgang},
   title={Survey on $L^2$-invariants and 3-manifolds},
   journal={Bull. Lond. Math. Soc.},
   volume={53},
   date={2021},
   number={6},
   pages={1583--1620},
   issn={0024-6093},
   review={\MR{4368687}},
%   doi={10.1112/blms.12536},
 }
 
\bib{Lueck:twisting}{article}{
   author={L\"uck, Wolfgang},
   title={Twisting $L^2$-invariants with finite-dimensional representations},
   journal={J. Topol. Anal.},
   volume={10},
   date={2018},
   number={4},
   pages={723--816},
   issn={1793-5253},
   review={\MR{3881040}},
%   doi={10.1142/S1793525318500279},
 }
 
\bib{Schick:L2-determinant}{article}{
    author={Schick, Thomas},
    title={$L^2$-determinant class and approximation of $L^2$-Betti numbers},
   date={1998},
   review={\arXiv{math/9807032v1}},
 }

 \bib{Schick:L2-determinant-published}{article}{
   author={Schick, Thomas},
   title={$L^2$-determinant class and approximation of $L^2$-Betti numbers},
   journal={Trans. Amer. Math. Soc.},
   volume={353},
   date={2001},
   number={8},
   pages={3247--3265},
   issn={0002-9947},
   review={\MR{1828605}},
%   doi={10.1090/S0002-9947-01-02699-X},
}

  \bib{Schmidt:dynamical-systems}{book}{
   author={Schmidt, Klaus},
   title={Dynamical systems of algebraic origin},
   series={Modern Birkh\"auser Classics},
   note={2011 reprint of the 1995 original},
   publisher={Birkh\"auser/Springer Basel AG, Basel},
   date={1995},
   pages={xviii+310},
   isbn={978-3-0348-0276-5},
   isbn={978-3-0348-0277-2},
   review={\MR{3024809}},
 }

  \end{biblist}
\end{bibdiv}
\end{document}